\def\NZQ{\mathbb}    
\def\RR{{\NZQ R}}
\def\frk{\mathfrak}    
\def\Phi{{\frk N}}
\def\eb{{\bf e}}
\newtheorem{Theorem}{Theorem}[section]
\newtheorem{Lemma}[Theorem]{Lemma}
\newtheorem{Proposition}[Theorem]{Proposition}
\theoremstyle{definition}
\newtheorem{Remark}[Theorem]{Remark}
\newtheorem{Example}[Theorem]{Example}
\newtheorem{Conjecture}[Theorem]{Conjecture}
\begin{document}

\title{The number of edges of the edge polytope of \\ a finite simple graph}
\author{Takayuki Hibi, Aki Mori, Hidefumi Ohsugi and Akihiro Shikama}

\address{Takayuki Hibi,
Department of Pure and Applied Mathematics,
Graduate School of Information Science and Technology,
Osaka University,
Toyonaka, Osaka 560-0043, Japan}
\email{hibi@math.sci.osaka-u.ac.jp}

\address{Aki Mori,
Department of Pure and Applied Mathematics,
Graduate School of Information Science and Technology,
Osaka University,
Toyonaka, Osaka 560-0043, Japan}
\email{a-mori@cr.math.sci.osaka-u.ac.jp}

\address{Hidefumi Ohsugi,
Department of Mathematical Sciences,
School of Science and Technology,
Kwansei Gakuin University,
Sanda, Hyogo, 669-1337, Japan}
\email{ohsugi@kwansei.ac.jp}

\address{Akihiro Shikama,
Department of Pure and Applied Mathematics,
Graduate School of Information Science and Technology,
Osaka University,
Toyonaka, Osaka 560-0043, Japan}
\email{a-shikama@cr.math.sci.osaka-u.ac.jp}

\subjclass[2010]{52B05, 05C30}
\keywords{finite simple graph, edge polytope}

\begin{abstract}
Let $d \geq 3$ be an integer.
It is known that the number of edges of 
the edge polytope of the complete graph with $d$ vertices
is $d(d-1)(d-2)/2$.
In this paper, we study the maximum possible number $\mu_d$
of edges of
the edge polytope arising from 
finite simple graphs with $d$ vertices.
We show that $\mu_{d}=d(d-1)(d-2)/2$
if and only if $3 \leq d \leq 14$.
In addition, we study the asymptotic behavior of $\mu_d$.
Tran--Ziegler gave a lower bound for $\mu_d$ by 
constructing a random graph.
We succeeded in improving this bound by 
constructing both a non-random graph
and a random graph whose complement is bipartite.
\end{abstract}

\maketitle

\vspace{-0.5cm}

\section*{Introduction}
The number of $i$-dimensional faces of a convex polytope
has been studied by many researchers for a long time.
One of the most famous classical results is ``Euler's formula."
Extremal problem concerning number of faces
is an important topic in the study of convex polytopes.
On the other hand, 
the study of edge polytopes of finite graphs 
has been conducted by many authors
from viewpoints of commutative algebra on toric ideals and combinatorics
of convex polytopes. 
We refer the reader to \cite{OhHinormal, OhHiquadratic}
for foundations of edge polytopes.  
Faces of edge polytopes are studied in, e.g., 
\cite{OhHinormal, OhHimulti, OH}.
Recently, Tran and Ziegler \cite{Ziegler}  studied extremal problem
on edge polytopes.
In particular, using \cite[Lemma 1.4]{OH},
they gave
bounds for the maximum possible number $\mu_d$
of edges of
the edge polytope arising from 
finite simple graphs with $d$ vertices.
Following \cite[Question 1.3]{HLZ}, 
we wish to find a finite simple graph
$G$ with $d$ vertices such that the edge polytope
of $G$ has  $\mu_{d}$ edges
and to compute $\mu_{d}$.

Recall that a finite   
{\em{simple}} graph is a finite graph with no loops and no multiple edges.
Let $[d] = \{ 1, \ldots, d \}$ be the vertex set
and $\Omega_{d}$ the set of  
finite simple graphs on $[d]$, where $d \geq 3$.
Let ${\eb}_i$ denote the $i$th unit coordinate
vector of the Euclidean space ${\RR}^d$. 
Let $G \in \Omega_{d}$ and $E(G)$ the set of edges of $G$.
If $e = \{i,j\} \in E(G)$, then we set
$\rho(e) = {\eb}_i + {\eb}_j \in {\RR}^d$.
The {\em edge polytope} ${\mathcal P}_G$ of $G \in \Omega_{d}$ 
is the convex hull of the finite set $\{ \rho(e) \, : \, e \in E(G) \}$
in $\RR ^d$. 
Let $\varepsilon(G)$ denote the number of edges, namely 1-dimensional faces, of 
$\mathcal{P}_G$. 
For example, consider the case of the complete graph $K_d$ on $[d]$.
By \cite[Lemma 1.4]{OH},
for edges $e$ and $f$ $(e \neq f)$ of  $K_d$, 
the convex hull of $\{ \rho(e) ,\rho(f) \}$ is an edge of 
the edge polytope $\mathcal{P}_{K_d}$ if and only if 
$e$ and $f$ have a common vertex.
Hence, 
$\varepsilon(K_d) = d {d-1 \choose 2}=d(d-1)(d-2)/2$.
On the other hand,
$
\varepsilon(K_{m,n}) = \ mn(m+n-2)/2,
$
where $K_{m,n}$ is the complete bipartite graph on the vertex set
$[\,m\,]\cup \{ m+1,\ldots, m+n\}$
for which $m , n \geq 1$
(see \cite[Theorem 2.5]{OhHimulti}).
In this paper, we are interested in 
$
\mu_{d} = 
\max \{ \, \varepsilon(G) \, : \, G \in \Omega_{d} \, \}
$ for $d \geq 3$.

\begin{Theorem}
\label{theorem}
For an integer $d \geq 3$, let $\Omega_d$ be the set of 
finite simple graphs on $[d]$.
Given a graph $G \in \Omega_d$, 
let $\varepsilon(G)$ denote the number of 
edges of the edge polytope $\mathcal{P}_G$ of $G$.
Then, the following holds:
\begin{itemize}
\item[(a)] 
If $3 \leq d \leq 13$ and $G \in \Omega_{d}$ with $G \neq K_d$, then
$\varepsilon({G})< \varepsilon({K_d})$. 
\item[(b)] 
Let $G \in \Omega_{14}$ with $G \neq K_{14}$.
Then
$\varepsilon({G}) \leq \varepsilon( K_{14})$.
Moreover, $\varepsilon({G}) = \varepsilon({K_{14}})$
if and only if either $G=K_{14} - K_{4,5}$ or
$G=K_{14}-K_{5,5}$.
\item[(c)] 
If $d \geq 15$, then there exists $G \in \Omega_{d}$
such that $\varepsilon(G) >
\varepsilon(K_d) $.
\end{itemize} 
\end{Theorem}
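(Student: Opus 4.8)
The plan is to convert the problem into graph theory via the standard description of the edges of an edge polytope, and then read off a closed formula for $\varepsilon(G)$. Recall (cf.\ \cite{OhHinormal,OhHiquadratic,OhHimulti}) that for $e\neq f$ in $E(G)$ the segment $[\rho(e),\rho(f)]$ is an edge of $\mathcal{P}_G$ if and only if either $e\cap f\neq\emptyset$, or $e\cap f=\emptyset$ and $\{e,f\}$ is the \emph{only} perfect matching of the four-vertex induced subgraph $G[e\cup f]$. (If $\rho(e)+\rho(f)=\sum_g\lambda_g\rho(g)$ with $\lambda_g\ge0$ and $\sum_g\lambda_g=2$, then comparing coordinates outside $e\cup f$ forces $g\subseteq e\cup f$ for every $g$ in the support, and solving the small linear system that remains shows the midpoint of $[\rho(e),\rho(f)]$ lies in the convex hull of the other vertices exactly when $e,f$ are disjoint and $G[e\cup f]$ has a second perfect matching.) Since the four-vertex graphs with a unique perfect matching are precisely $2K_2$, $P_4$ and the paw ($=K_3$ with a pendant edge), whose complements within four vertices are $C_4$, $P_4$ and $P_3+K_1$, and since the number of intersecting pairs of edges of $G$ is $\sum_v\binom{\deg_G v}{2}$, one obtains the master formula
\[
\varepsilon(G)=\sum_{v\in[d]}\binom{\deg_G v}{2}+p(G),\qquad
p(G):=\#\Bigl\{\,T\in\textstyle\binom{[d]}{4}\ :\ \overline{G}[T]\in\{C_4,\,P_4,\,P_3+K_1\}\,\Bigr\}.
\]
In particular $\varepsilon(K_d)=d\binom{d-1}{2}$, recovering the value in the introduction.

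For part (c) I would take $G=K_d-K_{m,n}$, so $\overline{G}$ is $K_{m,n}$ together with $d-m-n$ isolated vertices ($m+n\le d$). Counting with the master formula — $\overline{G}$ has $\binom{m}{2}\binom{n}{2}$ induced $C_4$'s, no induced $P_4$, and $\bigl(m\binom{n}{2}+n\binom{m}{2}\bigr)(d-m-n)$ induced $P_3+K_1$'s — gives a closed form for $\varepsilon(K_d-K_{m,n})$. Specializing to $m=n=5$ (legitimate once $d\ge10$) yields $\varepsilon(K_d-K_{5,5})=\varepsilon(K_d)+50(d-14)$, which exceeds $\varepsilon(K_d)$ for every $d\ge15$; this proves (c). The same computation gives $\varepsilon(K_{14}-K_{4,5})=\varepsilon(K_{14}-K_{5,5})=\varepsilon(K_{14})$, the candidate extremal graphs in (b).

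For (a) and (b) one must prove $\varepsilon(G)\le\varepsilon(K_d)$ whenever $d\le14$. Set $\bar d_v:=\deg_{\overline{G}}(v)$ and $Q(k):=k^{2}-(d+1)k+(3d-4)$. A bookkeeping step — for each induced $P_3$ of $\overline{G}$ count the fourth vertices that are non-adjacent to its centre, which counts an induced $P_3+K_1$ once, an induced $P_4$ twice, and an induced $C_4$ four times — combines with the master formula to give the exact identity
\[
\varepsilon(G)-\varepsilon(K_d)=-\tfrac12\sum_{v}\bar d_v\,Q(\bar d_v)\;-\;\sum_{v}(d-1-\bar d_v)\,t(v)\;-\;\#_{\mathrm{ind}}P_4(\overline{G})\;-\;3\,\#_{\mathrm{ind}}C_4(\overline{G}),
\]
where $t(v)$ is the number of triangles of $\overline{G}$ through $v$ and $\#_{\mathrm{ind}}$ counts induced subgraphs. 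All three correction sums are $\ge0$. For $3\le d\le7$ the parabola $Q$ stays positive on $\{0,1,\dots,d-1\}$, so the right-hand side is $\le0$ and vanishes only when $\bar d_v=0$ for all $v$, i.e.\ $G=K_d$; this settles (a) for $d\le7$. For $8\le d\le14$ the quadratic $Q$ dips negative on a middle band of degrees, so the first sum may be positive and the negative terms must be shown to compensate. The plan is: (i) bound $\#_{\mathrm{ind}}C_4(\overline G)$ and $\#_{\mathrm{ind}}P_4(\overline G)+\sum_v(d-1-\bar d_v)t(v)$ from below in terms of the degree sequence $(\bar d_v)$, again using the centre-vertex count — each induced $P_3$ of $\overline G$ extends to one of the three configurations — together with the observation that in a triangle-free graph every $C_4$ is induced (so a Jensen-type estimate on the number of $4$-cycles applies), while a graph with many triangles makes $\sum_v(d-1-\bar d_v)t(v)$ large; (ii) reduce to a finite optimization over degree sequences on at most $14$ vertices, checked for each $d$; (iii) at $d=14$, trace the equality conditions, which should force $\overline{G}$ to be triangle-free and induced-$P_4$-free with a rigid neighbourhood pattern, leaving only $\overline{G}\in\{K_{4,5},K_{5,5}\}$ up to isolated vertices, giving (b).

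The main obstacle is precisely step (i)--(iii) in the range $8\le d\le14$: for $\overline{G}=K_{5,5}$ every inequality used along the way is an equality, so no purely local (term-by-term or degree-sequence) estimate can by itself close the gap, and one is forced to exploit the global structure of $\overline{G}$ — either through a compression argument pushing an extremal $\overline{G}$ toward a complete bipartite graph, or through a careful case analysis on $\Delta(\overline{G})$ and $|E(\overline{G})|$. Producing a lower bound for the induced-$C_4$ count of an arbitrary graph that is simultaneously valid in general and sharp at $K_{5,5}$ is the delicate heart of the argument.
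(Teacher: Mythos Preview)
Your setup is sound and part~(c) is complete and essentially identical to the paper's: both specialize to $G=K_d-K_{5,5}$ and obtain $\varepsilon(G)-\varepsilon(K_d)=50(d-14)$. Your master formula coincides with the paper's Lemma~1.2, and the exact identity
\[
\varepsilon(G)-\varepsilon(K_d)=-\tfrac12\sum_{v}\bar d_v\,Q(\bar d_v)-\sum_{v}(d-1-\bar d_v)\,t(v)-\#_{\mathrm{ind}}P_4(\overline{G})-3\,\#_{\mathrm{ind}}C_4(\overline{G})
\]
is correct. For $3\le d\le 7$ the discriminant $d^{2}-10d+17$ of $Q$ is negative, so every term on the right is $\le 0$ and (a) follows in that range more directly than in the paper, which treats $5\le d\le 7$ through a short case split on $k_3(\overline{G})$.

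The genuine gap is $8\le d\le 14$, which you yourself flag as the ``main obstacle'' and leave as a plan. Your proposed route---lower-bound the induced $C_4$ and $P_4$ counts by degree-sequence data and then perform a finite optimization---is not carried out, and as you note, $K_{5,5}$ forces any such estimate to be tight, so nothing here is yet a proof. The paper closes this gap by abandoning the vertex-based decomposition in favour of an \emph{edge}-based one: for each edge $\{i,j\}$ of $H=\overline{G}$ it partitions $[d]\setminus\{i,j\}$ into the sets $X_{i,j},Y_{i,j},Z_{i,j}$ of vertices adjacent only to $i$, only to $j$, or to neither (the $k_3(i,j)$ common neighbours forming the remainder), rewrites $a(H)+b(H)+c(H)$ as $-\tfrac14 a(H)$ plus a sum over $E(H)$ of $\tfrac14|X||Y|+\tfrac12|Y||Z|+\tfrac12|Z||X|$, and maximizes each summand under $|X|+|Y|+|Z|=d-2-k_3(i,j)$. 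A one-variable quadratic optimization then gives
\[
\varepsilon(G)\ \le\ \varepsilon(K_d)+\frac{d^{2}-16d+29}{7}\,|E(H)|-\frac{3}{7}(d-8)\,k_3(H),
\]
which is strictly negative for $8\le d\le 13$ (where $d^{2}-16d+29<0$) and, after tracking the $O(1)$ slack at $d=14$, yields $\varepsilon(G)\le\varepsilon(K_{14})$ with equality forcing $k_3(H)=a(H)=0$ and $(|X_{i,j}|,|Y_{i,j}|,|Z_{i,j}|)\in\{(3,4,5),(4,3,5),(4,4,4)\}$ for every edge of $H$; from this one reads off $\overline{G}\in\{K_{4,5},K_{5,5}\}$ up to isolated vertices. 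This per-edge optimization is the missing idea in your proposal.
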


We devote Section $1$ to giving a proof of Theorem \ref{theorem}.  
At present, for $d \geq 15$, it remains unsolved to find $G \in \Omega_{d}$ with
$\mu_{d} = \varepsilon(G)$ and to compute $\mu_{d}$.
(Later, we will see that $\mu_{15} \geq \varepsilon(K_{15}) + 50=
1415$.)
In Section 2, we study the asymptotic behavior of $\mu_d$.
Recently, Tran--Ziegler \cite{Ziegler} 
gave a lower bound for $\mu_d$ by a random graph:
$$
\varepsilon (G(d, 1/\sqrt{3})) = \frac{1}{54} d^4 
+ \frac{1}{18} d^3
- \frac{8 }{27} d^2
+ \frac{1}{3} d.
$$
They also gave an upper bound for $\mu_d$:
$\mu_d \leq (\frac{1}{32} + o(1)) d^4$.
(However, this upper bound is not sharp.
See \cite[Remark]{Ziegler}.)
In this paper, we succeeded in improving their lower bound by 
constructing a non-random graph (see Example \ref{twocompletebipartite})
and a random graph whose complement is bipartite
(see Theorem \ref{randombipartite}):
$$
\varepsilon ({\mathbb G}) = \frac{5\sqrt{5}-11}{8} \  d^4 
- \frac{12 \sqrt{5} - 27}{2} d^3
+\frac{19 \sqrt{5} - 44}{2} d^2
+d,
$$
where ${\mathbb G} = K_d  - G(K_{d/2,d/2},p)$ with $p=3-\sqrt{5}$.
These results suggest the following:
\begin{Conjecture}
Let $G \in \Omega_{d}$ with $\mu_{d} = \varepsilon(G)$.
Then, the complement of $G$ is a bipartite graph.
\end{Conjecture}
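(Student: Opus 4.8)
The plan is to reduce the statement to a purely combinatorial extremal problem and then attack it variationally. The starting point is an exact formula for $\varepsilon(G)$. Using the characterization of the $1$-faces of an edge polytope (see \cite{OhHinormal, OhHiquadratic}), two vertices $\rho(e), \rho(f)$ with $e, f \in E(G)$ span an edge of $\mathcal{P}_G$ precisely when either $e \cap f \neq \emptyset$, or $e \cap f = \emptyset$ and the induced subgraph $G[e \cup f]$ on the four vertices $e \cup f$ has $\{e, f\}$ as its unique perfect matching. Writing $d_v = \deg_G(v)$ and letting $N_1(G)$ be the number of $4$-subsets $S \subseteq [d]$ such that $G[S]$ has exactly one perfect matching, this yields
\[
\varepsilon(G) = \sum_{v \in [d]} \binom{d_v}{2} + N_1(G).
\]
(One checks $N_1(K_d) = 0$ and $N_1(K_{m,n}) = 0$, recovering the two displayed formulas.) I would first record how both terms behave under complementation $H = K_d - G$: the first becomes $\sum_v \binom{d-1-\deg_H(v)}{2}$, and for the second, a $4$-subset $S$ contributes to $N_1$ exactly when $H[S]$ meets (shares an edge with) exactly two of the three perfect matchings of $K_4$ on $S$, viewed as pairs of opposite edges. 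Maximizing $\varepsilon$ is thus an extremal problem on $H$, and the target is to show every maximizer $H$ is bipartite.

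The core of the argument would be variational. Assume $G$ is a maximizer and, for contradiction, that $H = K_d - G$ is not bipartite; fix a shortest odd cycle $C$ of $H$, of length $2k+1$. I would design a local modification of $H$ along $C$ --- either deleting a single edge of $C$ and inserting a carefully chosen non-edge, or performing an alternating ``rotation'' of two edges incident to $C$ --- producing a graph $H'$ whose odd girth, or number of odd cycles, has strictly decreased, and I would aim to prove $\varepsilon(K_d - H') > \varepsilon(K_d - H)$, contradicting maximality. Iterating the weak form of such a move would already show that some maximizer has bipartite complement; the strict inequality is what upgrades this to the full statement that all maximizers do. The behaviour of the degree term under such a swap is easy to control, since only the endpoints of the modified edges change degree, and the evidence from Theorem~\ref{theorem}(b), whose equality cases $K_{14}-K_{4,5}$ and $K_{14}-K_{5,5}$ both have bipartite complement, supports expecting a strict gain.

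The main obstacle is the bookkeeping of $\Delta N_1 = N_1(K_d - H') - N_1(K_d - H)$. A local edge swap alters the ``unique perfect matching'' status of $\Theta(d^2)$ of the $\binom{d}{4}$ four-subsets, and these contributions come with mixed signs: replacing an odd configuration by an even one creates new unique-matching $4$-sets but can also destroy others, depending on the adjacencies of the swapped endpoints to the remaining $d-2$ vertices. Controlling this net change seems to require either a discharging scheme that redistributes the gain of each repaired odd cycle across the affected $4$-subsets, or a structural supermodularity property of the functional $H \mapsto N_1(K_d-H)$ that forces every local optimum to be bipartite. Establishing such global control --- rather than the local, single-swap estimate --- is where I expect the difficulty to concentrate, and is presumably the reason the statement is recorded here only as a conjecture. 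A complementary route worth pursuing in parallel is to pass to the dense limit: express $\varepsilon$ through the adjacency pattern of $H$, optimize the resulting quartic functional over graphons, and verify --- consistently with the extremal graph ${\mathbb G} = K_d - G(K_{d/2,d/2}, 3-\sqrt{5})$ of Theorem~\ref{randombipartite} --- that every graphon maximizer is supported off the diagonal blocks, i.e.\ bipartite; transferring this back to the exact finite maximizers would then be the remaining, and delicate, step.
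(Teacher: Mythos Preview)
The statement you are addressing is recorded in the paper as a \emph{Conjecture} (it appears both in the introduction and as Conjecture~\ref{lastconjecture}), and the paper offers no proof of it; the only supporting evidence given is that Theorem~\ref{theorem} verifies it for $3 \le d \le 14$. There is therefore no paper proof to compare your attempt against.

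Your proposal is not a proof but a research outline, and you say as much yourself (``is presumably the reason the statement is recorded here only as a conjecture''). The exact formula you write down, $\varepsilon(G) = \sum_v \binom{d_v}{2} + N_1(G)$, is correct and is essentially a repackaging of Lemma~\ref{SHIKAMA}; your translation of $N_1$ to the complement $H$ also matches the paper's case analysis. However, the variational step --- swapping an edge along a shortest odd cycle of $H$ and arguing that $\varepsilon$ strictly increases --- is exactly where a genuine new idea would be needed, and your outline does not supply one: as you observe, a single edge swap changes the ``unique perfect matching'' status of $\Theta(d^2)$ four-subsets with signs governed by global adjacency data, and nothing in the plan controls the sign of $\Delta N_1$. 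The graphon route is likewise only heuristic: even if the limiting quartic functional were shown to be maximized by bipartite graphons (which is itself not established), transferring this to \emph{exact} finite maximizers is a separate and typically hard step. In short, you have correctly identified the formula and the obstruction, but the conjecture remains open --- consistent with how the paper presents it.
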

Note that, by Theorem \ref{theorem}, this conjecture is true for $3 \leq d \leq 14$.

\section{Proof of Theorem \ref{theorem}}
In this section, we give a proof of Theorem \ref{theorem}.
The following lemma is studied in \cite[Lemma 1.4]{OH}.

\begin{Lemma}\label{cc}
Let $e$ and $f$ $(e \neq f)$ be edges of a graph $G \in \Omega_d$. 
Then, the convex hull of $\{ \rho(e) ,\rho(f) \}$ is an edge of 
the edge polytope $\mathcal{P}_G$ if and only if one of the following conditions is satisfied.
\begin{itemize}
\item[(i)] $e$ and $f$ have a common vertex in $[d]$.
\item[(ii)] $e = \{ i,j \}$ and $f = \{k , l \}$ have no common 
vertices, and
the induced subgraph of $G$ on the vertex set $\{ i,j,k,l \}$ 
 has no cycles of length $4$.
\end{itemize}
\end{Lemma}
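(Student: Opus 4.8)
The plan is to characterize when the segment joining $\rho(e)$ and $\rho(f)$ is an edge of $\mathcal{P}_G$ by deciding when this segment fails to be a face, i.e.\ when $\rho(e)$ and $\rho(f)$ can be ``separated'' by other vertices of $\mathcal{P}_G$ only through a relation forcing a higher-dimensional face. Concretely, the segment $[\rho(e),\rho(f)]$ is \emph{not} an edge precisely when there is an affine dependence that expresses a point of the relative interior of $[\rho(e),\rho(f)]$ as a convex combination of vertices $\rho(g)$ with $g \in E(G) \setminus \{e,f\}$; equivalently, when the midpoint $\tfrac12(\rho(e)+\rho(f))$ lies in the convex hull of $\{\rho(g) : g \in E(G)\setminus\{e,f\}\}$. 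So the first step is to reduce the problem to this midpoint membership question, using the fact that $\mathcal{P}_G$ is a $0/1$-type polytope (all vertices are $\eb_i + \eb_j$), and then to analyze the possible convex combinations.

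The key computation is the following: suppose $\tfrac12(\rho(e)+\rho(f)) = \sum_{g} \lambda_g \rho(g)$ with $\lambda_g \geq 0$, $\sum \lambda_g = 1$, $g$ ranging over edges other than $e,f$. Reading off coordinates, the support of the vector $\rho(e)+\rho(f)$ is contained in $e \cup f \subseteq [d]$, so every edge $g$ appearing with $\lambda_g > 0$ must have both endpoints in $e \cup f$. If $e$ and $f$ share a vertex, then $|e \cup f| = 3$, and the only edges inside a $3$-set are the three edges of a triangle; a quick check of the coordinate equations shows no such nontrivial combination exists (the vector $\rho(e)+\rho(f)$ has a coordinate equal to $2$ at the common vertex, which cannot be matched), giving case (i). If $e=\{i,j\}$ and $f=\{k,l\}$ are disjoint, then $e \cup f = \{i,j,k,l\}$ has four elements, $\rho(e)+\rho(f)$ is the all-ones vector on these four coordinates, and the edges available are exactly the edges of the induced subgraph $G[\{i,j,k,l\}]$. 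The equation $\tfrac12(\rho(e)+\rho(f)) = \sum \lambda_g \rho(g)$ then becomes a perfect-matching/fractional-matching condition on a graph with four vertices each of ``degree'' $1$ in the weighting; one sees that such a decomposition different from the trivial one $\tfrac12\rho(e)+\tfrac12\rho(f)$ exists if and only if the other perfect matching $\{\{i,k\},\{j,l\}\}$ or $\{\{i,l\},\{j,k\}\}$ is present, i.e.\ if and only if $G[\{i,j,k,l\}]$ contains a $4$-cycle through $i,j,k,l$. This yields case (ii).

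The main obstacle is handling the disjoint case cleanly: one must argue that the \emph{only} way to write the all-ones vector on $\{i,j,k,l\}$ as a nonnegative combination of incidence vectors of edges within that $4$-set, other than $\tfrac12\rho(\{i,j\}) + \tfrac12\rho(\{k,l\})$, is via one of the two competing perfect matchings, and that each such alternative forces a $4$-cycle. The subtlety is ruling out ``mixed'' combinations that use three or more edges with fractional weights (e.g.\ using a triangle plus a pendant-type edge); here the degree-$1$ constraint at each of the four coordinates pins down the combination to lie on an edge of the fractional matching polytope of $G[\{i,j,k,l\}]$, whose vertices are the $\pm$-half-integral matchings, and a short case analysis over subgraphs of $K_4$ finishes it. Once the midpoint criterion is established and this four-vertex case analysis is done, assembling the two cases into the stated ``if and only if'' is routine, and one should double-check the degenerate situations (when $G[\{i,j,k,l\}]$ is small) are subsumed correctly.
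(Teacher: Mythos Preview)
The paper does not prove this lemma at all; it simply cites \cite[Lemma~1.4]{OH}. So there is no ``paper's own proof'' to compare against, and your proposal supplies a direct argument where the paper offers none.

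Your approach is correct and is the natural one. One small point of care: the criterion you state at the outset, that $[\rho(e),\rho(f)]$ fails to be an edge precisely when the \emph{midpoint} lies in $\mathrm{conv}\{\rho(g):g\neq e,f\}$, is not valid for arbitrary polytopes (take a convex quadrilateral whose diagonals do not bisect each other), so the appeal to ``$0/1$-type'' needs to do real work. The robust version is that $[\rho(e),\rho(f)]$ is an edge iff $\tfrac12(\rho(e)+\rho(f))$ has a \emph{unique} convex representation by vertices, and this is in fact what your computation checks: in case~(i) the coordinate equal to $1$ at the common vertex forces any representation to use only $\rho(e),\rho(f)$; in case~(ii) you parametrize \emph{all} representations (allowing $\rho(e),\rho(f)$ to appear) and find a nontrivial one iff one of the two complementary perfect matchings $\{\{i,k\},\{j,l\}\}$ or $\{\{i,l\},\{j,k\}\}$ lies in $E(G)$, which together with $e,f$ is exactly the presence of a $4$-cycle in the induced subgraph. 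The ``midpoint in the convex hull of the others'' formulation then follows a posteriori here (push the free parameter to an extreme), rather than serving as the starting hypothesis. With that adjustment in phrasing, the argument is complete.
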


The {\em complementary} graph $\overline{G}$ of 
a graph $G \in \Omega_d$
is the graph whose vertex set is $[d]$
and whose edges are the non-edges of $G$.
For a vertex $i$ of a graph $G$,
let $\deg_G(i)$ denote the degree of $i$ in $G$.
We translate Lemma \ref{cc} in terms of the complement $\overline{G}$ of $G$.

\begin{Lemma}\label{SHIKAMA}
Let $H$ be the complement of a graph $G \in \Omega_d$
Then, we have
\begin{eqnarray*}
\varepsilon(G) &=& \sum_{i=1}^d \binom{d-1 - \deg_H(i)}{2} 
+a(H) + b(H) + c(H)\\
&=&
\varepsilon(K_d) +
\frac{1}{2}
\sum_{i=1}^d \deg^2_H(i)
-(2d-3) |E(H)|
+a(H) + b(H) + c(H),
\end{eqnarray*}
where $a(H)$, $b(H)$ and $c(H)$ are
the number of induced subgraphs of $H$
on $4$ vertices
of the form
(a) a path of length $3$;
(b) a cycle of length $4$;
(c) a path of length $2$ and one isolated vertex,
respectively.
\end{Lemma}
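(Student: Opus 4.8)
The plan is to evaluate $\varepsilon(G)$ directly from Lemma~\ref{cc} by partitioning the unordered pairs $\{e,f\}$ of distinct edges of $G$ into those satisfying condition (i) and those satisfying condition (ii), and then to rewrite each count in terms of $H=\overline{G}$, using that $\deg_G(i)=d-1-\deg_H(i)$ for every vertex $i$. These two classes of pairs are disjoint and exhaust all pairs, so $\varepsilon(G)$ is the sum of the two counts.

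First I would count the pairs satisfying (i). Since $G$ is simple, two distinct edges share at most one vertex, so each pair of edges with a common vertex is counted exactly once in $\sum_{i=1}^d\binom{\deg_G(i)}{2}$, namely at its unique common vertex. Substituting $\deg_G(i)=d-1-\deg_H(i)$ turns this contribution into $\sum_{i=1}^d\binom{d-1-\deg_H(i)}{2}$.

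The heart of the argument is counting the pairs satisfying (ii). A pair of disjoint edges of $G$ is the same datum as a $4$-subset $S\subseteq[d]$ together with a pair of disjoint edges of the induced subgraph $G[S]$, the subset being recovered as the union of the two edges. Hence the number of pairs satisfying (ii) equals $\sum_S m(G[S])$, where the sum is over the $4$-subsets $S\subseteq[d]$ for which $G[S]$ has no cycle of length $4$, and $m(J)$ denotes the number of pairs of disjoint edges of a $4$-vertex graph $J$. A short inspection of the graphs on $4$ vertices shows that, among those with no $4$-cycle, the only ones with $m(J)\geq 1$ are $2K_2$, the path of length $3$, and the paw (a triangle with one pendant edge), and that $m(J)=1$ for each of these. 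Taking complements inside $K_4$, the condition $G[S]\cong 2K_2$ (respectively $G[S]$ a path of length $3$, respectively $G[S]$ a paw) is equivalent to $H[S]$ being a cycle of length $4$ (respectively a path of length $3$, respectively a path of length $2$ together with an isolated vertex). Therefore $\sum_S m(G[S]) = b(H)+a(H)+c(H)$, and combining this with the previous paragraph gives the first displayed equality.

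Finally, the second displayed form follows by routine algebra: expanding
\[
\binom{d-1-\deg_H(i)}{2}=\frac{1}{2}\Big((d-1)(d-2)-(2d-3)\deg_H(i)+\deg^2_H(i)\Big),
\]
summing over $i\in[d]$, and using $\sum_{i=1}^d\deg_H(i)=2|E(H)|$ together with $\varepsilon(K_d)=d(d-1)(d-2)/2$. I expect the main obstacle to be precisely the $4$-vertex case analysis: one must verify that no configuration other than $2K_2$, the path of length $3$, and the paw can contribute, and match each with its $K_4$-complement correctly, so that the total assembles into $a(H)+b(H)+c(H)$ without over- or under-counting. Everything else is bookkeeping.
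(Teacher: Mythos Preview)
Your proof is correct and follows essentially the same approach as the paper: count the type-(i) pairs as $\sum_i\binom{\deg_G(i)}{2}$, identify the type-(ii) pairs with the $4$-vertex induced subgraphs of $G$ isomorphic to $2K_2$, $P_4$, or the paw (each contributing one pair), and pass to complements to obtain $a(H)+b(H)+c(H)$. The only cosmetic difference is your explicit use of the counting function $m(G[S])$ over $4$-subsets, which the paper leaves implicit.
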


\begin{proof}
First, the number of pairs of edges satisfying Lemma \ref{cc} (i) is equal to
\begin{eqnarray*}
\sum_{i=1}^d \binom{d-1 - \deg_H(i)}{2} 
&=&
\sum_{i=1}^d \frac{(d-1)(d-2) - (2d-3)\deg_H(i) + \deg^2_H(i)}{2} \\
&=&
\varepsilon(K_d) +
\frac{1}{2}
\sum_{i=1}^d \deg^2_H(i)
-(2d-3) |E(H)|.
\end{eqnarray*}
Second, the number of pairs of edges satisfying Lemma \ref{cc} (ii) is 
equal to the number of 
the induced subgraphs $W$ of $G$
where $W$ is one of the following:
(a')
$W$ is a path of length 3;
(b')
$W$ consists of two disjoint edges;
(c')
$W$ is a graph on $\{i,j,k,\ell\}$
with
$E(W)=
\{
\{i,j\}, \{j,k\}, \{i,k\}, \{k,\ell\}
\}$.
Note that each induced subgraph has exactly one such pair of edges.
The complement of each (a'), (b'), and (c') is
(a), (b) and (c), respectively.
\end{proof}

For a graph $H \in \Omega_r$ with $r \leq d$,
let $K_d - H$ denote the graph $G \in \Omega_d$ such that
$E(G) = E(K_d) \setminus E(H)$. 
Using Lemma \ref{SHIKAMA}, we have the following:

\begin{Proposition}
\label{ShikamaConjecture}
Let $H \in \Omega_r$ and let $\psi(H)$ denote 
the number of induced paths in $H$ of length $2$.
Then, the function
$
\varphi (d) = \varepsilon (K_d-H) - \varepsilon (K_d)
$
for 
$
d = r, r+1,r+2,\ldots
$
is a linear polynomial of $d$
whose leading coefficient is $\psi (H) -2 |E(H)|$.
\end{Proposition}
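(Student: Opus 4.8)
The plan is to apply Lemma \ref{SHIKAMA} directly to $G = K_d - H$ and to track how each summand depends on $d$. Observe first that the complement of $K_d - H$ is the graph $H_d \in \Omega_d$ obtained from $H$ by adjoining $d - r$ isolated vertices; thus $\deg_{H_d}(i) = \deg_H(i)$ for $i \leq r$ and $\deg_{H_d}(i) = 0$ for $i > r$, while $|E(H_d)| = |E(H)|$. Substituting $H_d$ into the second formula of Lemma \ref{SHIKAMA} yields
\[
\varphi(d) = \frac12 \sum_{i=1}^r \deg_H^2(i) - (2d-3)|E(H)| + a(H_d) + b(H_d) + c(H_d).
\]

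Next I would dispose of $a$ and $b$. An isolated vertex can occur neither in an induced path of length $3$ nor in an induced cycle of length $4$, so every such induced subgraph of $H_d$ already lies in $H$; hence $a(H_d) = a(H)$ and $b(H_d) = b(H)$, both independent of $d$.

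The only remaining $d$-dependent contribution is $c(H_d)$, and evaluating it is the crux. An induced subgraph of $H_d$ on four vertices of type (c) is precisely the choice of an induced path of length $2$ on a vertex set $Q$ with $|Q| = 3$ (necessarily $Q \subseteq [r]$, since an isolated vertex has no edges), together with a fourth vertex $\ell \notin Q$ that is non-adjacent in $H_d$ to all three vertices of $Q$. Writing $N_H(Q)$ for the set of vertices of $H$ outside $Q$ adjacent to some vertex of $Q$, the number of admissible $\ell$ equals $d - 3 - |N_H(Q)|$: each of the $d - r$ adjoined isolated vertices is admissible, as are the $r - 3 - |N_H(Q)|$ non-neighbours of $Q$ inside $[r]$. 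Summing over the $\psi(H)$ induced paths of length $2$ in $H$,
\[
c(H_d) = \sum_{Q} \bigl( d - 3 - |N_H(Q)| \bigr) = \psi(H)\, d - 3\psi(H) - \sum_{Q} |N_H(Q)|,
\]
a linear polynomial in $d$ whose leading coefficient is $\psi(H)$ and whose constant term depends only on $H$.

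Finally I would collect terms. The coefficient of $d$ in $\varphi(d)$ arises only from $-2d\,|E(H)|$ and from $\psi(H)\, d$, giving leading coefficient $\psi(H) - 2|E(H)|$; every other term, namely $\tfrac12\sum_i \deg_H^2(i)$, $3|E(H)|$, $a(H)$, $b(H)$ and $-3\psi(H) - \sum_Q |N_H(Q)|$, is independent of $d$. Thus $\varphi(d)$ is a linear polynomial in $d$ with the asserted initial coefficient. I expect the only step needing genuine care to be the count inside $c(H_d)$, specifically the verification that the number of admissible fourth vertices is $d$ minus a quantity depending only on $H$; once Lemma \ref{SHIKAMA} is invoked, the rest is routine bookkeeping.
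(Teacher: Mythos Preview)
Your proof is correct and follows essentially the same approach as the paper: both apply Lemma~\ref{SHIKAMA} to the complement $H_d$ of $K_d-H$, observe that $a(H_d)$ and $b(H_d)$ are independent of $d$, and identify $\psi(H)$ as the rate of growth of $c(H_d)$. The only difference is in execution: the paper verifies linearity by computing the finite difference $\varphi(d+1)-\varphi(d)$ via the first (binomial) formula of Lemma~\ref{SHIKAMA}, whereas you compute $\varphi(d)$ outright via the second formula and read off the coefficient of $d$; your explicit evaluation of $c(H_d)=\sum_Q(d-3-|N_H(Q)|)$ is a slightly more detailed version of the paper's observation that $c(H_2)=c(H_1)+\psi(H)$.
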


\begin{proof}
Since $d$ is a natural number it is sufficient to show that 
$\varphi (d+1) - \varphi (d) = \psi (H) -2 |E(H)|$ for any $d$.
Let $H_1 = \overline{K_d-H}$ and $H_2 =\overline{K_{d+1}-H}$.
Then, $H_2$ is obtained by adding one isolated vertex
$d+1$ to $H_1$.
Hence, it follows that
$a(H_1) = a(H_2)$, 
$b(H_1) = b(H_2)$,
$c(H_1) + \psi(H) = c(H_2)$
and $\deg_{H_1} (i) = \deg_{H_2}(i)$ for all $1 \leq i \leq d$.
Thus, by Lemma \ref{SHIKAMA},
we have
\begin{eqnarray*}
& &
\varphi (d+1) - \varphi (d)\\
 &=& 
\varepsilon (K_{d+1}-H) - \varepsilon (K_{d+1})
-
\varepsilon (K_d-H) + \varepsilon (K_d)\\
\
 &=& 
\sum_{i=1}^{d+1} {d - \deg_{H_2}(i) \choose 2} 
-
\sum_{i=1}^d {d-1 - \deg_{H_1}(i) \choose 2} 
+\psi(H) \\
& &
+\frac{d(d-1)(d-2)}{2}
-\frac{(d+1)d(d-1)}{2}\\
 &=& 
{d \choose 2}
+
\sum_{i=1}^{d}
\left( {d - \deg_{H_1}(i) \choose 2} 
-
{d-1 - \deg_{H_1}(i) \choose 2} 
\right)
+\psi(H) 
-\frac{3d(d-1)}{2}\\
 &=& 
{d \choose 2}
+
\sum_{i=1}^{d}
(d-1 - \deg_{H_1}(i))
+\psi(H) 
-\frac{3d(d-1)}{2}\\
&=&
\psi(H) 
-\sum_{i=1}^{d}
\deg_{H_1}(i)\\
&=&
\psi (H) -2 |E(H)|,
\end{eqnarray*}
as desired.
\end{proof}

\begin{Proposition}\label{ConnectedComponent}
Let $G \in \Omega_{d}$ and let
$H_1, H_2, \ldots ,H_m$ be all the nonempty connected components of $\overline{G}$.
Then,
$\varepsilon(K_d) - \varepsilon(G) 
= \sum_{j=1}^m ( \varepsilon(K_d) - \varepsilon(K_d - H_j) ).$
 \end{Proposition}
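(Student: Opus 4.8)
The plan is to exploit the additivity built into Lemma~\ref{SHIKAMA}. Write $H = \overline{G}$, so that $H = H_1 \sqcup H_2 \sqcup \cdots \sqcup H_m$ as a disjoint union of its connected components (here I allow isolated vertices of $H$ to be absorbed silently, since they contribute nothing to any of the terms below). By Lemma~\ref{SHIKAMA},
\[
\varepsilon(K_d) - \varepsilon(G) = (2d-3)\,|E(H)| - \tfrac12 \sum_{i=1}^d \deg_H^2(i) - a(H) - b(H) - c(H).
\]
The first two terms are manifestly additive over the components: $|E(H)| = \sum_j |E(H_j)|$ and, since each vertex lies in exactly one component, $\sum_i \deg_H^2(i) = \sum_j \sum_{i \in V(H_j)} \deg_{H_j}^2(i)$.

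Next I would treat $a(H)$, $b(H)$ and each induced $4$-vertex configuration counted by them. A path of length $3$ or a cycle of length $4$ is a connected graph on $4$ vertices, so any induced subgraph of $H$ of one of these two types has all four of its vertices in a single component $H_j$; hence $a(H) = \sum_j a(H_j)$ and $b(H) = \sum_j b(H_j)$. The only subtlety is $c(H)$, the number of induced subgraphs consisting of a path of length $2$ together with one isolated vertex: such a configuration is \emph{not} connected, so its three ``path'' vertices lie in one component $H_j$ while the isolated vertex may lie in any other component (or be a genuine isolated vertex of $H$). This is exactly the discrepancy that Proposition~\ref{ShikamaConjecture} already isolated: adding an isolated vertex to a graph increases $c$ by $\psi$, the number of induced paths of length $2$. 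So $c(H)$ is not additive, and this is the main point requiring care.

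To handle it cleanly, I would instead prove the identity by induction on $m$ using Proposition~\ref{ConnectedComponent} in the guise already supplied: apply Lemma~\ref{SHIKAMA} to $G$ and, separately, to each $K_d - H_j$, and compare. Equivalently — and this is the cleanest route — compute both sides via Proposition~\ref{ShikamaConjecture}. Fix $r$ so that every $H_j \in \Omega_{r_j}$ embeds in $\Omega_r$; for $d$ large the quantity $\varepsilon(K_d) - \varepsilon(K_d - H_j) = -\varphi_{H_j}(d)$ is the linear polynomial of $d$ with leading coefficient $2|E(H_j)| - \psi(H_j)$, and likewise $\varepsilon(K_d)-\varepsilon(G) = -\varphi_H(d)$ has leading coefficient $2|E(H)| - \psi(H)$. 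Since $|E(H)| = \sum_j |E(H_j)|$ trivially, and since $\psi(H) = \sum_j \psi(H_j)$ because an induced path of length $2$ is connected and therefore lives in a single component, the leading coefficients match. For the constant terms one checks the identity at a single convenient value of $d$ (e.g.\ $d = r$, or by a direct appeal to Lemma~\ref{SHIKAMA} together with the additivity of $|E|$, $\sum \deg^2$, $a$, $b$ established above and the observation that the non-additive part of $c$ is accounted for by exactly the same ``isolated-vertex'' bookkeeping on both sides).

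The hard part is the bookkeeping around $c(H)$: one must verify that the over-count produced by pairing a $2$-path in one component with an isolated vertex in another component on the left-hand side is matched term-for-term by the corresponding over-counts appearing inside the sum $\sum_j (\varepsilon(K_d) - \varepsilon(K_d - H_j))$ on the right, where each $K_d - H_j$ sees the vertices of the \emph{other} components as isolated. Making this matching explicit — equivalently, reducing to the case $m = 2$ and invoking the increment $c(H_1 \sqcup \{v\}) = c(H_1) + \psi(H_1)$ from the proof of Proposition~\ref{ShikamaConjecture} iterated across components — completes the proof. Everything else is immediate from the additivity of $|E(H)|$, $\sum_i \deg_H^2(i)$, $a$, $b$, and $\psi$ over connected components.
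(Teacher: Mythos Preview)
Your final observation---that on the right-hand side each $K_d - H_j$ ``sees the vertices of the other components as isolated''---is exactly the paper's proof, but you have buried it under unnecessary detours. The paper simply sets $H_j' := \overline{K_d - H_j}$, which is the graph on the full vertex set $[d]$ whose edge set is $E(H_j)$ (i.e.\ the component $H_j$ padded out with isolated vertices). With this choice, \emph{all five} quantities in Lemma~\ref{SHIKAMA} are additive: $|E(H)| = \sum_j |E(H_j')|$, $\sum_i \deg_H^2(i) = \sum_j \sum_i \deg_{H_j'}^2(i)$, $a(H) = \sum_j a(H_j')$, $b(H) = \sum_j b(H_j')$, and---crucially---$c(H) = \sum_j c(H_j')$ as well, since every type-(c) configuration in $H$ has its $2$-path in a unique component $H_j$, and the fourth vertex is isolated from that $2$-path in $H$ if and only if it is isolated from it in $H_j'$. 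One application of Lemma~\ref{SHIKAMA} to each side then finishes the proof in one line.

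Your perceived ``non-additivity of $c$'' arose only because you were implicitly comparing with $c(H_j)$ computed on the vertex set $V(H_j)$ rather than on $[d]$. Once you pass to $H_j'$ on $[d]$, the bookkeeping you describe as ``the hard part'' evaporates; there is no need for the linearity argument via Proposition~\ref{ShikamaConjecture}, no induction on $m$, and no iteration of the increment formula.
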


\begin{proof}
Let $H = \overline{G}$ and 
let $H_j' = \overline{K_d - H_j}$
for $1 \leq j \leq m$.
Then, it is easy to see that
$
|E(H)| = \sum_{j =1}^m |E(H_j')|
$,
$
\sum_{i=1}^d \deg^2_H(i)
= 
\sum_{j=1}^m
\sum_{i =1}^d \deg^2_{H_j'}(i)
$,
$
a(H)= \sum_{j =1}^m a(H_j')
$,
$
b(H)= \sum_{j =1}^m b(H_j')
$, and
$
c(H)= \sum_{j =1}^m c(H_j')
$.
Thus, by Lemma \ref{SHIKAMA}, we are done.
\end{proof}

A graph $G \in \Omega_d$ is called {\em bipartite}
if $[d]$ admits a partition into two sets of vertices $V_1$
and $V_2$ such that,
for every edge $\{i,j\}$ of $G$,
either $i \in V_1, j \in V_2$ or $j \in V_1, i \in V_2$ is satisfied.
A {\em complete bipartite} graph is a bipartite graph such that every 
pair of vertices $i,j$ with $i \in V_1$ and $j \in V_2$ is adjacent.
Let $K_{m,n}$ denote the complete bipartite graph with
$|V_1| = m$ and $|V_2|=n$.

\begin{Proposition}\label{completebipartite}
Let $G = K_d - K_{m,n}$ such that
$m+n \le d$ and $m,n \ge 1$. Then,
 \[
\varepsilon(G) -
\varepsilon(K_d)
=
\frac{1}{2}mn (m+n-6)
d
-
\frac{1}{4} mn(3mn+2m^2+2n^2-5m-5n-13).\]
\end{Proposition}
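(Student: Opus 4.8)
The plan is to apply Lemma~\ref{SHIKAMA} directly to $G = K_d - K_{m,n}$, whose complement $H := \overline{G}$ is the complete bipartite graph $K_{m,n}$ on a vertex set $V_1 \cup V_2$ with $|V_1| = m$ and $|V_2| = n$, together with $d - m - n$ isolated vertices. All five quantities appearing in Lemma~\ref{SHIKAMA} can be read off once we note one structural fact about complete bipartite graphs, so the proposition reduces to substitution and elementary algebra.

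First I would record the local data: $|E(H)| = mn$, and since $H$ has $m$ vertices of degree $n$, $n$ vertices of degree $m$, and the rest of degree $0$, one gets $\sum_{i=1}^d \deg_H^2(i) = mn^2 + nm^2 = mn(m+n)$. Next the key observation: in a complete bipartite graph the center of an induced path of length $2$ lies on one side and its two leaves on the other, while the two endpoints of a path of length $3$ would be forced onto \emph{opposite} sides and hence be adjacent; therefore $H$ has no induced $P_4$, i.e.\ $a(H) = 0$. An induced $4$-cycle of $K_{m,n}$ is precisely a choice of two vertices from $V_1$ and two from $V_2$, so $b(H) = \binom{m}{2}\binom{n}{2}$. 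Writing $\psi = \psi(K_{m,n}) = m\binom{n}{2} + n\binom{m}{2} = \tfrac12 mn(m+n-2)$ for the number of induced paths of length $2$ in $K_{m,n}$, the quantity $c(H)$ counts pairs consisting of such a path and a fourth vertex adjacent to none of its three vertices. The step I expect to be the main obstacle is getting $c(H)$ right: one must notice that \emph{every} vertex of $V_1 \cup V_2$ is adjacent to at least one vertex of any induced $P_3$ (the center covers one side and the two leaves cover the other), so the fourth vertex has to be one of the $d - m - n$ genuinely isolated vertices of $H$; this gives $c(H) = \psi\,(d - m - n)$.

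Substituting into Lemma~\ref{SHIKAMA} yields
\[
\varepsilon(G) - \varepsilon(K_d) = \tfrac12 mn(m+n) - (2d-3)mn + \binom{m}{2}\binom{n}{2} + \tfrac12 mn(m+n-2)(d - m - n),
\]
after which a routine expansion gives coefficient of $d$ equal to $-2mn + \tfrac12 mn(m+n-2) = \tfrac12 mn(m+n-6)$ and constant term $-\tfrac14 mn(3mn + 2m^2 + 2n^2 - 5m - 5n - 13)$, as asserted. As a sanity check, the leading coefficient $\tfrac12 mn(m+n-6) = \psi(K_{m,n}) - 2|E(K_{m,n})|$ agrees with Proposition~\ref{ShikamaConjecture}; indeed one could instead invoke that proposition to get linearity in $d$ and the leading term immediately, then pin down the constant by the single evaluation $d = m+n$, where $K_{m+n} - K_{m,n} = K_m \sqcup K_n$ and $\varepsilon(K_m \sqcup K_n) = \tfrac12 m(m-1)(m-2) + \tfrac12 n(n-1)(n-2) + \binom{m}{2}\binom{n}{2}$ by Lemma~\ref{cc}. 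Either route leaves only bookkeeping.
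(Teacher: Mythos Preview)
Your proof is correct. Your primary route---plugging $H=K_{m,n}\sqcup (d-m-n)\text{ isolated vertices}$ directly into Lemma~\ref{SHIKAMA} after computing $a(H)=0$, $b(H)=\binom{m}{2}\binom{n}{2}$, and $c(H)=\psi(K_{m,n})(d-m-n)$---differs from what the paper does: the paper instead invokes Proposition~\ref{ShikamaConjecture} to get linearity in $d$ with slope $\psi(K_{m,n})-2mn$, and then fixes the constant by evaluating at the single value $d=m+n$, where $K_{m+n}-K_{m,n}=K_m\sqcup K_n$ and $\varepsilon(K_m\sqcup K_n)$ is read off from Lemma~\ref{cc}. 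That is precisely the alternative you sketch in your final paragraph, so you have in fact recovered both arguments. Your direct approach is a bit more hands-on (it requires the observation that no fourth vertex inside $V_1\cup V_2$ can be isolated from an induced $P_3$, which you handle correctly), while the paper's approach trades that case analysis for the earlier structural result; neither is materially shorter once the algebra is included.
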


\begin{proof}
Let $H= K_{m,n}$.
Then,
$$
\psi (H) - 2 |E(H)| =  m {n \choose 2}+n {m \choose 2} 
-2 mn =
\frac{1}{2}mn (m+n-6).
$$
Moreover, since $K_{m+n} - K_{m,n}$ is
the disjoint union of $K_m$ and $K_n$, we have
\begin{eqnarray*}
\varphi (m+n) &=& 
\frac{m(m-1)(m-2)}{2}
+
\frac{n(n-1)(n-2)}{2}
+
{m \choose 2} {n \choose 2}
\\
 & &-\frac{(m+n)(m+n-1)(m+n-2)}{2}\\
&=&
\frac{1}{4} mn(mn-7m-7n+13)
\end{eqnarray*}
by Lemma \ref{cc}.
Hence, by Proposition \ref{ShikamaConjecture},
\begin{eqnarray*}
\varepsilon (G)-\varepsilon (K_d) 
&=&
\frac{1}{2}mn (m+n-6)
(d-(m+n))
+\frac{1}{4} mn(mn-7m-7n+13)\\
&=&
\frac{1}{2}mn (m+n-6)
d
-
\frac{1}{4} mn(3mn+2m^2+2n^2-5m-5n-13),
\end{eqnarray*}
as desired.
 \end{proof}

Let $k_3(H)$ denote the number of triangles
(i.e., cycles of length $3$) of $H$.
The following lemma is important.

\begin{Lemma}\label{keylemma}
Let $H$ be the complement graph of $G \in \Omega_d$.
Then, we have
$$\varepsilon(G) 
\leq
\varepsilon(K_d)
+
\frac{d^2-16d+29}{7} |E(H)|
-
\frac{3}{7} (d -8)
k_3(H).
$$
\end{Lemma}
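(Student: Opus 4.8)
The first step is to rephrase the claim via Lemma~\ref{SHIKAMA}. Write $m=|E(H)|$ and let $\psi(H)$ be the number of induced paths of length $2$ in $H$; classifying the length-$2$ paths of $H$ according to whether their two endpoints are adjacent gives $\sum_{i=1}^{d}\binom{\deg_H(i)}{2}=\psi(H)+3k_3(H)$, while $\tfrac12\sum_{i=1}^{d}\deg_H^{2}(i)=\sum_{i=1}^{d}\binom{\deg_H(i)}{2}+m$. Substituting these two identities into the formula of Lemma~\ref{SHIKAMA} and simplifying — all the $d$-dependent coefficients collapse — one sees that the asserted estimate is \emph{equivalent} to
\begin{equation}\label{eqHibiclean}
7\bigl(\psi(H)+a(H)+b(H)+c(H)\bigr)+3(d-1)\,k_3(H)\ \le\ (d-1)^2\,m .
\end{equation}
By Proposition~\ref{ConnectedComponent} one may in addition assume $H$ connected. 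So the goal is to prove \eqref{eqHibiclean} for connected $H$.

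The plan for \eqref{eqHibiclean} is a decomposition over the edges of $H$. Fix $e=\{u,v\}\in E(H)$ and partition $[d]\setminus\{u,v\}$ into $T$ (vertices adjacent in $H$ to both $u$ and $v$), $A$ (adjacent to $u$ only), $B$ (adjacent to $v$ only), and $R$ (adjacent to neither), of sizes $t,\alpha,\beta,r$, so $t+\alpha+\beta+r=d-2$. Counting the induced paths of length $2$, paths of length $3$, cycles of length $4$, and $(\text{path of length }2)\sqcup(\text{isolated vertex})$ of $H$ that contain $e$, together with the triangles of $H$ through $e$, and weighting each such configuration by the reciprocal of its number of edges, yields a quantity $\lambda(e)$ with $\sum_{e\in E(H)}\lambda(e)=7\bigl(\psi(H)+a(H)+b(H)+c(H)\bigr)+3(d-1)k_3(H)$; thus \eqref{eqHibiclean} is precisely the assertion $\sum_e\lambda(e)\le(d-1)^2 m$. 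Moreover $\lambda(e)$ has an explicit closed form: it equals a polynomial of degree $2$ in $\alpha,\beta,r,t$ (and $d$) minus a nonnegative combination of the numbers of $H$-edges running between $A$, $B$ and $R$; in particular, dropping those nonnegative correction terms bounds $\lambda(e)$ from above by that polynomial.

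It remains to carry out the summation. The main obstacle is that $\lambda(e)\le(d-1)^2$ is \emph{false} for some edges — already the middle edge of an induced path of length $3$ in $H$ violates it — so a purely local argument cannot finish and a global comparison of $\sum_e\lambda(e)$ with $(d-1)^2 m$ is needed. I would do this either by a discharging scheme that transfers the surplus from the many "sparse" edges to the few "dense" ones, or by optimizing the sum directly over the admissible degree sequences of $H$, using $\sum_v\deg_H(v)=2m$, convexity of $\binom{\cdot}{2}$, and — in order to recapture the correction terms dropped above, whose total over $e$ is $4\,b(H)$ — a K\H{o}v\'{a}ri--S\'{o}s--Tur\'{a}n-type estimate for the number of four-cycles. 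I expect the extremal configuration to be the balanced complete bipartite graph $H=K_{d/2,d/2}$: for it \eqref{eqHibiclean} reduces to $(3d/2-5)^2\ge 0$, and it is this near-equality that pins down the constant $7$, hence also the constants $d^2-16d+29$ and $d-8$ in the statement. Once $\sum_e\lambda(e)\le(d-1)^2 m$ is proved, unwinding the reduction of the first paragraph yields Lemma~\ref{keylemma}.
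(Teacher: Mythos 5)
Your opening reduction is correct: substituting $\tfrac12\sum_i\deg_H^2(i)=\psi(H)+3k_3(H)+|E(H)|$ into Lemma \ref{SHIKAMA} does show that the lemma is equivalent to $7\bigl(\psi(H)+a(H)+b(H)+c(H)\bigr)+3(d-1)k_3(H)\le (d-1)^2|E(H)|$, and your partition of $[d]\setminus\{u,v\}$ into $T,A,B,R$ is exactly the paper's $k_3(i,j),X_{i,j},Y_{i,j},Z_{i,j}$. But the heart of the proof --- actually establishing $\sum_e\lambda(e)\le(d-1)^2|E(H)|$ --- is missing: you only name two candidate strategies (a discharging scheme, or an optimization over degree sequences supplemented by a K\H{o}v\'ari--S\'os--Tur\'an bound on four-cycles) and carry out neither. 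Worse, the premise on which you abandon the local approach is false. With the right apportionment of each configuration among its edges --- the paper gives an induced path of length $3$ weight $\tfrac14$ at its middle edge and $\tfrac12$ at each end edge, so that it is counted with total weight $\tfrac54$, the excess being absorbed by a term $-a(H)/4$ which may simply be dropped for an upper bound --- the contribution of each single edge $e=\{i,j\}$ is $\frac{|X_{i,j}|+|Y_{i,j}|}{2}+\frac14|X_{i,j}||Y_{i,j}|+\frac12|Y_{i,j}||Z_{i,j}|+\frac12|Z_{i,j}||X_{i,j}|$, and using $|X||Y|\le\bigl(\frac{|X|+|Y|}{2}\bigr)^2$ together with $|X|+|Y|+|Z|=d-2-k_3(i,j)$ this is at most $-\frac{7}{16}(|X|+|Y|)^2+\frac{d-1-k_3(i,j)}{2}(|X|+|Y|)\le\frac{(d-1-k_3(i,j))^2}{7}\le\frac{(d-1)(d-1-k_3(i,j))}{7}$. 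Summing over edges and using $\sum_{e}k_3(e)=3k_3(H)$ gives exactly the stated bound. So the inequality is edge-local after all; no global redistribution is needed.

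Your cited obstruction is also not what you think it is: under your ``reciprocal of the number of edges'' weighting, the middle edge of an induced path of length $3$ receives $\lambda(e)=7d-\tfrac{56}{3}$, which exceeds $(d-1)^2$ only for $d\in\{4,5\}$ and then only by $\tfrac13$; and under the paper's weighting the corresponding quantity is $7(d-\tfrac{11}{4})=(d-1)^2-(d-\tfrac92)^2\le(d-1)^2$ for all integers $d$. Your observation that $H=K_{d/2,d/2}$ makes the inequality reduce to a perfect square is a nice sanity check, but identifying the near-extremal configuration is not a substitute for the estimate itself. As written, the proposal sets up the correct framework and then stops exactly where the work begins.
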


\begin{proof}
The number of pairs of edges satisfying Lemma 1.1 (i) is,
by Lemma \ref{SHIKAMA}, 
$
\varepsilon(K_d)
-(2d-3) |E(H)|
+
\frac{1}{2}
\sum_{i=1}^d \deg^2_H(i).
$
For an edge $\{i,j\}$ of $H$,
let $k_3(i,j)$ be the number of triangles in $H$ containing $\{i,j\}$.
We define three subsets of $[d] \setminus \{i,j\}$:
\begin{eqnarray*}
X_{i,j} &=& \{ \ell \in   [d] \setminus \{i,j\} : \{i,\ell\} \in E(H), \{j,\ell\} \notin E(H)    \},\\
Y_{i,j} &=& \{ \ell \in   [d] \setminus \{i,j\} : \{j,\ell\} \in E(H), \{i,\ell\} \notin E(H)    \},\\
Z_{i,j} &=& \{ \ell \in   [d] \setminus \{i,j\} : \{i,\ell\} \notin E(H), \{j,\ell\} \notin E(H)    \}.
\end{eqnarray*}
It then follows that, $|X_{i,j}| + |Y_{i,j}| + |Z_{i,j}| + k_3(i,j) = d-2$, and
\begin{eqnarray*}
\frac{1}{2}
\sum_{i=1}^d \deg^2_H(i)
&=&
\frac{1}{2}
\sum_{\{i,j\} \in E(H)} 
(\deg_H(i)+\deg_H(j))\\
&=&
\frac{1}{2}
\sum_{\{i,j\} \in E(H)} 
(|X_{i,j}|+|Y_{i,j}|+2 k_3(i,j)+2)\\
&=&
|E(H)|
+
3 k_3(H)
+
\frac{1}{2}
\sum_{\{i,j\} \in E(H)} 
(|X_{i,j}|+|Y_{i,j}|).
\end{eqnarray*}

Second, we count the number of pairs satisfying Lemma 1.1 (ii).
By Lemma \ref{SHIKAMA}, this number is equal to 
$a(H) + b(H) + c(H)$.
Here, we count the number of the induced subgraphs $H'$
of type (a), (b) and (c)
containing an edge $e=\{i,j\}$ of $H$.
If $e$ is an edge of $H'$,
then the other two vertices $\ell$ and $m$ of 
$H'$ satisfy exactly one of
the following conditions:
\begin{enumerate}
\item[(i)]
$\ell \in X_{i,j}, m \in Y_{i,j}$;
\item[(ii)]
$\ell \in Y_{i,j}, m \in Z_{i,j}$;
\item[(iii)]
$\ell \in Z_{i,j}, m \in X_{i,j}$.
\end{enumerate}
If $i,j, \ell, m$ satisfy condition (i), 
then one of the following holds:
\begin{itemize}
\item
$H'$ is a path $(e_1,e_2,e_3)$
and $e = e_2$ (type (a)) ;
\item
$H'$ is a cycle of length 4 
and $e$ is one of four edges (type (b)).
\end{itemize}
It then follows that
$$
a(H) + 4 b(H) =
\sum_{\{i,j\} \in E(H)} 
|X_{i,j}||Y_{i,j}| 
.
$$
If $i,j, \ell, m$ satisfy either condition (ii) or (iii),
then one of the following holds:
\begin{itemize}
\item
$H'$ is a path $(e_1,e_2,e_3)$
and $e \in \{e_1, e_3\}$ (type (a)) ;
\item
$H'$ is a path $(e_1,e_2)$ with one isolated vertex
and $e \in \{e_1,e_2\}$ (type (c)).
\end{itemize}
It then follows that
$$
2a(H) + 2 c(H)=
\sum_{\{i,j\} \in E(H)} 
\left(
|Y_{i,j}||Z_{i,j}| + 
|Z_{i,j}||X_{i,j}| 
\right).
$$
Thus, we have
$$
a(H) + b(H) + c(H)=
-\frac{a(H)}{4}
+\! \! \!
\sum_{\{i,j\} \in E(H)} 
\left(
\frac{1}{4}
|X_{i,j}||Y_{i,j}| + 
\frac{1}{2}
|Y_{i,j}||Z_{i,j}| + 
\frac{1}{2}
|Z_{i,j}||X_{i,j}| 
\right).
$$
Subject to $|X_{i,j}| + |Y_{i,j}| + |Z_{i,j}| = d-2-k_3(i,j)$, 
we study an upper bound of
$$
\alpha=
\sum_{\{i,j\} \in E(H)} 
\left(
\frac{
|X_{i,j}|+|Y_{i,j}| 
}{2}
+
\frac{1}{4}
|X_{i,j}||Y_{i,j}| + 
\frac{1}{2}
|Y_{i,j}||Z_{i,j}| + 
\frac{1}{2}
|Z_{i,j}||X_{i,j}|
\right).
$$
Each summand of $\alpha$ satisfies
\begin{eqnarray*}
& &
\frac{
|X_{i,j}|+|Y_{i,j}| 
}{2}
+
\frac{1}{4}
|X_{i,j}||Y_{i,j}| + 
\frac{1}{2}
|Y_{i,j}||Z_{i,j}| + 
\frac{1}{2}
|Z_{i,j}||X_{i,j}|\\
&=&
\frac{1}{4}
|X_{i,j}||Y_{i,j}| + 
\frac{1}{2}
(|X_{i,j}|+|Y_{i,j}| )
(d-1 -k_3(i,j) -( |X_{i,j}|+ |Y_{i,j}| ))\\
&\leq &
\frac{1}{4}
\left(
\frac{
|X_{i,j}|+|Y_{i,j}| 
}
{2}
\right)^2
+ 
\frac{1}{2}
(|X_{i,j}|+|Y_{i,j}| )
(d-1 -k_3(i,j) -( |X_{i,j}|+ |Y_{i,j}| ))\\
& = &
-
\frac{7}{16}
(|X_{i,j}|+|Y_{i,j}|)^2
+ 
\frac{d-1 -k_3(i,j)}{2}
(|X_{i,j}|+|Y_{i,j}| ).
\end{eqnarray*}
The last function has the maximum number
$
\frac{1}{7}
(d-1 -k_3(i,j))^2
$
 when
$
|X_{i,j}|+|Y_{i,j}| = \frac{4}{7} (d-1 -k_3(i,j))
$.
Hence, 
\begin{eqnarray*}
\sum_{\{i,j\} \in E(H)} 
\frac{1}{7}
(d-1 -k_3(i,j))^2
&\leq &
\sum_{\{i,j\} \in E(H)} 
\frac{1}{7}
(d-1)
(d-1 -k_3(i,j))\\
&=&
\frac{1}{7}
\sum_{\{i,j\} \in E(H)} 
(d-1)^2
-
\frac{1}{7}
\sum_{\{i,j\} \in E(H)} 
(d-1)k_3(i,j)\\
&=&
\frac{1}{7}
(d-1)^2
|E(H)|
-
\frac{3}{7}
(d-1)
k_3(H)
\end{eqnarray*}
is an upper bound of $\alpha$.
Thus,
$$
\varepsilon(K_d)
-(2d-3) |E(H)|
+
|E(H)|
+
3 k_3(H)
+
\frac{1}{7}
(d-1)^2
|E(H)|
-
\frac{3}{7}
(d-1)
k_3(H)
$$
is an upper bound of $\varepsilon(G)$ as desired.
\end{proof}


Using Proposition \ref{completebipartite}
and Lemma \ref{keylemma}, we prove
Theorem \ref{theorem}.

\begin{proof}[Proof of Theorem 0.1]
(a)
Let $3 \leq d \leq 13$ and $G \in \Omega_{d}$ with $G \neq K_d$.
If $d=3$, then $\varepsilon({G})< \varepsilon({K_d})$ is trivial.
If $d=4$, then $ \varepsilon({K_4}) = 12$.
Since $|E(G)| < 6$,
we have $ \varepsilon(G) \leq {5 \choose 2} =10 < \varepsilon({K_4}) $.
Let $d \geq 5$ and let $H$ be the complement graph of $G$.
By Lemma \ref{keylemma},
$$
\varepsilon({G})- \varepsilon({K_d})
\leq
\frac{d^2-16d+29}{7} |E(H)|
-
\frac{3}{7} (d -8)
k_3(H).
$$
If $8 \leq d \leq 13$, then 
$
\varepsilon({G})- \varepsilon({K_d})
< 0
$
since
$\frac{d^2-16d+29}{7} < 0$, $|E(H)| > 0$ and $k_3(H) \geq 0$.
Let $5 \leq d \leq 7$.
Then,
$$
\varepsilon({G})- \varepsilon({K_d})
\leq 
\left\{
\begin{array}{cc}
-
\frac{26}{7} |E(H)|
+
\frac{9}{7}
k_3(H) & \mbox{if } d =5,
\medskip
\\
-
\frac{31}{7} |E(H)|
+
\frac{6}{7}
k_3(H)  & \mbox{if } d =6,
\medskip
\\
-
\frac{34}{7} |E(H)|
+
\frac{3}{7}
k_3(H) & \mbox{if } d =7.
\end{array}
\right.
$$
Hence,
if $k_3(H) \leq 2$, then $\varepsilon({G})- \varepsilon({K_d})$ is negative.
On the other hand, if $k_3(H) \geq 3$,
then $|E(H)| \geq 5$.
Since $k_3 (H) \leq {d \choose 3}$, it follows that
$\varepsilon({G})- \varepsilon({K_d})$ is negative.

(b)
Let $G \in \Omega_{14}$ with $G \neq K_{14}$
and let $H=\overline{G}$.
We need to evaluate the function which appears in the proof of
Lemma \ref{keylemma} more accurately by focusing on $d = 14$.
Let $|Z_{i,j}| = 12 - k_3(i,j) - |X_{i,j}|- |Y_{i,j}|$ and 
\begin{eqnarray*}
f& =&
\frac{
|X_{i,j}|+|Y_{i,j}| 
}{2}
+
\frac{1}{4}
|X_{i,j}||Y_{i,j}| + 
\frac{1}{2}
|Y_{i,j}||Z_{i,j}| + 
\frac{1}{2}
|Z_{i,j}||X_{i,j}| \\
g&=&
-
\frac{7}{16}
(|X_{i,j}|+|Y_{i,j}|)^2
+ 
\frac{13 -k_3(i,j)}{2}
(|X_{i,j}|+|Y_{i,j}| )
\end{eqnarray*}
be functions of $|X_{i,j}|$ and $|Y_{i,j}|$.
Recall that $f \leq g \leq \frac{1}{7} (13 -k_3(i,j))^2$
and
$
g=\frac{1}{7} (13 -k_3(i,j))^2$
when
$
|X_{i,j}|+|Y_{i,j}| = \frac{4}{7} (13 -k_3(i,j))
$.
If
$
1 \leq k_3(i,j) \leq 12
$, then 
$$
\frac{1}{7}
(13 -k_3(i,j))^2
=
24 - \frac{13}{7}k_3(i,j)
-
\frac{11}{7}
+
\frac{1}{7}
(k_3(i,j)-1)(k_3(i,j)-12)
<
24 - \frac{13}{7}k_3(i,j).
$$
If $k_3(i,j)=0$, then
$\frac{1}{7} (13 -k_3(i,j))^2 = 24+1/7$.
However, since
$$
4 \left(
\frac{
|X_{i,j}|+|Y_{i,j}| 
}{2}
+
\frac{1}{4}
|X_{i,j}||Y_{i,j}| + 
\frac{1}{2}
|Y_{i,j}||Z_{i,j}| + 
\frac{1}{2}
|Z_{i,j}||X_{i,j}|
\right)
$$
is an integer, 
the value of $f$
is at most $24$
if $|X_{i,j}|$ and $|Y_{i,j}|$ are non-negative integers.
Thus, for $k_3(i,j) = 0,1,\ldots,12$,
the value of $f$
is at most $
24 - \frac{13}{7} k_3(i,j)
$
if $|X_{i,j}|$ and $|Y_{i,j}|$ are non-negative integers.
Thus, by the same argument in 
the proof of Lemma \ref{keylemma},
$
\varepsilon({G})- \varepsilon({K_{14}})
$
is at most
$$
-24 |E(H)|
+
3k_3(H)
+
24
|E(H)|
-
\frac{3 \cdot 13}{7}
k_3(H)
-
\frac{a(H)}{4}
=
-\frac{18}{7}
k_3(H)
-
\frac{a(H)}{4}
\leq 0.
$$
Therefore, $\varepsilon({G}) \leq \varepsilon({K_{14}})$.

Suppose that $\varepsilon({G}) = \varepsilon({K_{14}})$.
Then, 
$
-\frac{18}{7}
k_3(H)
-
\frac{a(H)}{4}
\geq 0.
$
Since $k_3(H), a(H) \geq 0$,
we have $k_3(H)=a(H)=0$.
Moreover,
$$
\frac{
|X_{i,j}|+|Y_{i,j}| 
}{2}
+
\frac{1}{4}
|X_{i,j}||Y_{i,j}| + 
\frac{1}{2}
|Y_{i,j}||Z_{i,j}| + 
\frac{1}{2}
|Z_{i,j}||X_{i,j}|
=24
$$
and $|X_{i,j}|+|Y_{i,j}|+|Z_{i,j}| =12$ 
for an arbitrary edge $\{i,j\}$ of $H$.
It is easy to see that $|X_{i,j}|+|Y_{i,j}| \in \{ 7,8\}$.
It then follows that, for an arbitrary $\{i,j\} \in E(H)$,
$(|X_{i,j}|, |Y_{i,j}|,|Z_{i,j}|) \in \{(3,4,5), (4,3,5), (4,4,4)\}$. 
In particular,
the degree of each vertex is either $0$, $4$ or $5$.
Moreover, since $k_3(H)=0$,
$\{j\} \cup X_{i,j}$ and $\{i\} \cup Y_{i,j}$ 
are independent sets.
Hence, by $a(H)=0$,
the induced subgraph of $H$ on 
$\{i,j\} \cup X_{i,j} \cup Y_{i,j}$
is the complete bipartite graph
$K_{|X_{i,j}|+1,|Y_{i,j}|+1}$.

Suppose that an edge $\{i,j\}$ of $H$ satisfies
$(|X_{i,j}|, |Y_{i,j}|,|Z_{i,j}|) = (4,4,4)$.
Then, the induced subgraph of $H$ on
$\{i,j\} \cup X_{i,j} \cup Y_{i,j}$
is $K_{5,5}$.
Since the degree of any vertex of $H$ is either, $0$, $4$ or $5$, 
other four vertices are isolated.
Therefore, $G = K_{14}-K_{5,5}$.

It is enough to consider the case that
 $(|X_{s,t}|, |Y_{s,t}|,|Z_{s,t}|) \neq (4,4,4)$ holds
for every edge $\{s,t\}$.
Suppose that $(|X_{i,j}|, |Y_{i,j}|) = (3,4)$.
Then, the induced subgraph of $H$ on 
$\{i,j\} \cup X_{i,j} \cup Y_{i,j}$
is $K_{4,5}$.
Since  $(|X_{s,t}|, |Y_{s,t}|,|Z_{s,t}|)\neq (4,4,4)$ 
for each edge $\{s,t\}$, 
the degree of every vertex in 
$\{i\} \cup Y_{i,j}$
is $4$.
In this case, $K_{4,5}$ is a connected component of $H$. 
Since the degree of other five vertices is at most 4, 
it follows that they are isolated vertices. 
Therefore, $G = K_{14}-K_{4,5}$.

(c)
Let $d \geq 15$
and
let $G = K_d - K_{m,n} \in \Omega_d$.
By Proposition \ref{completebipartite}, 
we have
 \[\varepsilon(G) -\varepsilon(K_d) =\frac{1}{2}mn (m+n-6)
d
-
\frac{1}{4} mn(3mn+2m^2+2n^2-5m-5n-13).\]
When $m=n=5$, we obtain
$\varepsilon(G) -\varepsilon(K_d) = 50(d-14) >0$
as desired.
\end{proof}
\def\daitai{
{ \ 
=\hspace{-1em}\raisebox{1.1ex}{.}\hspace{.1em}\raisebox{-0.2ex}{.}}
\ }

\section{Asymptotic behavior of $\mu_d$}

For $0< p < 1$ and an integer $d >0$, let $G(d, p)$ denote the 
random graph on the vertex set $[d]$ in which
the edges are chosen independently with probability $p$.
For a graph $H$ on the vertex set $[d]$ and
$0< p < 1$, let $G(H, p)$ denote the 
random graph on the vertex set $[d]$ in which
the edges of $H$ are chosen independently with probability $p$
and the edges not belonging to $H$ are not chosen.
Tran--Ziegler \cite{Ziegler} showed that,
for the random graph $G(d, 1/\sqrt{3})$,
$$
\varepsilon (G(d, 1/\sqrt{3})) = \frac{1}{54} d^4 
+ \frac{1}{18} d^3
- \frac{8 }{27} d^2
+ \frac{1}{3} d,
$$
and hence this is a lower bound for $\mu_{d}$.

First, for $d \gg 0$, we give an example of a (non-random) graph $G$
on the vertex set $[d]$
such that
$\varepsilon (G) >\varepsilon(G(d, 1/\sqrt{3}))$.

\begin{Example}
\label{twocompletebipartite}
Let $G = K_d - K_{ad,ad} - K_{(1/2-a)d,(1/2-a)d}$ where 
$a = \frac{1}{28}(7 + \sqrt{21})$
and $d \gg 0$.
By Propositions \ref{ConnectedComponent} and
\ref{completebipartite},
it follows that
$$
\varepsilon(G) =
\frac{9}{448} d^4 
+\frac{1}{7} d^3 
-\frac{103}{112} d^2
+d.
$$
Since
$
1/54 \daitai 0.0185
$
and
$
9/448 \daitai 0.0201
$, we have
$\varepsilon (G) >\varepsilon(G(d, 1/\sqrt{3}))$
for $d \gg 0$.
\end{Example}

Second, we give a random graph ${\mathbb G}$
on the vertex set $[d]$ such that
$\varepsilon ({\mathbb G}) >\varepsilon(G(d, 1/\sqrt{3}))$
for $d \gg 0$.

\begin{Theorem}
\label{randombipartite}
For an integer $d$,
let ${\mathbb G}$  be a random graph
 $K_d  - G(K_{d/2,d/2}, p)$
with $p = 3-\sqrt{5}$.
Then, 
$$
\varepsilon ({\mathbb G}) = \frac{5\sqrt{5}-11}{8} \  d^4 
- \frac{12 \sqrt{5} - 27}{2} d^3
+\frac{19 \sqrt{5} - 44}{2} d^2
+d.
$$
In particular, we have $\varepsilon ({\mathbb G}) >\varepsilon(G(d, 1/\sqrt{3}))$ for all $d \gg 0$.
\end{Theorem}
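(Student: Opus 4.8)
The plan is to compute the leading term of $\varepsilon(\mathbb{G})$ as a function of the probability parameter $p$, and then optimize. Writing $H = G(K_{d/2,d/2},p)$ for the random bipartite subgraph, I would combine Lemma \ref{SHIKAMA} with the substitution $G = K_d - H$. The key observation is that for a random graph the quantities $|E(H)|$, $\sum \deg_H^2(i)$, $a(H)$, $b(H)$ and $c(H)$ concentrate around their expectations, so up to lower-order terms it suffices to compute $\mathbb{E}[\varepsilon(\mathbb{G})]$ by linearity of expectation. Since $\varepsilon(\mathbb{G}) \leq \varepsilon(K_d) \le d^3$ deterministically, the $d^4$ leading coefficient is governed entirely by the term $a(H) + b(H) + c(H)$; equivalently by the term counting pairs of disjoint edges of $G$ whose induced $4$-vertex subgraph of $H$ is a path of length $3$, a $4$-cycle, or a path of length $2$ plus an isolated point.

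The heart of the computation is therefore to count, for a generic $4$-subset $\{i,j,k,\ell\}$ with (say) two of them on each side of the bipartition $V_1 \sqcup V_2$ of $K_{d/2,d/2}$, the probability that the induced subgraph of $H$ on these four vertices is one of the three forbidden types (a), (b), (c). First I would note that only the $\binom{d/2}{2}^2 \sim d^4/64$ four-subsets with a $2$-$2$ split across $V_1, V_2$ contribute at order $d^4$ (the $3$-$1$ and $4$-$0$ splits give $O(d^3)$ such subsets since one side is too small, or more precisely contribute a lower-order count). For a $2$-$2$ subset there are four possible edges of $K_{d/2,d/2}$ among the four vertices, each present independently with probability $p$, so I can enumerate the $2^4 = 16$ subgraphs and tally which fall into types (a), (b), (c): type (b) is exactly the full $4$-cycle, probability $p^4$; type (a) needs exactly three of the four bipartite edges, which happens with probability $4p^3(1-p)$; type (c) needs exactly two edges sharing a vertex — of the $\binom{4}{2}=6$ two-edge configurations, $4$ share a vertex — so probability $4p^2(1-p)^2$. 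Summing, the expected number of forbidden induced $4$-subgraphs is asymptotically $\frac{d^4}{64}\bigl(p^4 + 4p^3(1-p) + 4p^2(1-p)^2\bigr)$, and the leading coefficient of $\varepsilon(\mathbb{G})$ is $\frac{1}{64}\bigl(p^4 + 4p^3(1-p)+4p^2(1-p)^2\bigr)$. (I would double-check this polynomial by cross-referencing the deterministic Example \ref{twocompletebipartite}: there $\varepsilon(G)$ has leading coefficient matching the value of this function at the appropriate "effective probability.")

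The final step is calculus: maximize $f(p) = p^4 + 4p^3(1-p) + 4p^2(1-p)^2 = 4p^2 - 4p^3 + p^4$ on $[0,1]$. Then $f'(p) = 8p - 12p^2 + 4p^3 = 4p(p-1)(p-2)$, which vanishes in $(0,1)$ only when... that is not right, so I must instead have $f(p) = 4p^2(1-p)^2 + 4p^3(1-p) + p^4$; expanding carefully, $f(p) = p^2\bigl(4(1-p)^2 + 4p(1-p) + p^2\bigr) = p^2\bigl(4 - 4p + p^2\bigr) = p^2(2-p)^2$. So $f(p) = \bigl(p(2-p)\bigr)^2$, which on $[0,1]$ is increasing; but $p$ ranges over valid probabilities $[0,1]$, so the stated optimum $p = 3-\sqrt 5 \approx 0.764$ must come from a constraint I am missing — most likely the correct count involves $(1-p)$ weighting from edges of $\mathbb{G}$ (i.e.\ non-edges of $H$) as well, since the $4$-subset must also be counted only when the relevant pair of edges actually lies in $G$. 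I would therefore redo the enumeration keeping track of which edges must be \emph{absent} from $H$ (present in $G$), obtaining a genuinely quartic objective $g(p)$ with an interior maximum; setting $g'(p) = 0$ should yield $p = 3 - \sqrt 5$ and $g(3-\sqrt 5) = \frac{5\sqrt 5 - 11}{8}$. The main obstacle is precisely this bookkeeping: correctly identifying, for each of the three induced-subgraph types (a), (b), (c), exactly which of the $\binom{4}{2}$ pairs of vertices must be edges of $H$ and which the complementary condition forces, so that the optimization lands on the advertised constant; the concentration argument and the final derivative computation are routine by comparison.
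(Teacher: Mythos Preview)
Your overall strategy via Lemma~\ref{SHIKAMA} is sound, and the reduction to computing the expected value of $a(H)+b(H)+c(H)$ is correct (everything else in that lemma is $O(d^3)$). But there is a concrete mistake that explains why your optimization collapses: the $3$--$1$ splits are \emph{not} lower order. There are
\[
2\,m\binom{m}{3}\;\sim\;\frac{m^4}{3}\;=\;\frac{d^4}{48}
\]
four-subsets with three vertices in one part and one in the other, so they contribute at order $d^4$. On such a subset the only possible $H$-edges are the three joining the solo vertex to the others; the induced subgraph of $H$ is of type~(c) (a path of length~$2$ plus an isolated vertex) precisely when exactly two of these three edges are present, which happens with probability $3p^2(1-p)$. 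Hence the $3$--$1$ splits add
\[
\frac{d^4}{48}\cdot 3p^2(1-p)\;=\;\frac{p^2(1-p)}{16}\,d^4
\]
to $c(H)$. Adding this to your (correct) $2$--$2$ contribution $\dfrac{p^2(2-p)^2}{64}\,d^4$ gives the leading coefficient
\[
\frac{p^2(2-p)^2}{64}+\frac{p^2(1-p)}{16}
\;=\;\frac{p^2(p^2-8p+8)}{64},
\]
whose derivative vanishes at $p^2-6p+4=0$, i.e.\ $p=3-\sqrt5$, with value $\dfrac{5\sqrt5-11}{8}$.

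Your proposed fix---inserting $(1-p)$ weights for ``edges that must actually lie in $G$''---is the wrong diagnosis: once you have passed to Lemma~\ref{SHIKAMA}, the counts $a(H),b(H),c(H)$ are purely about induced subgraphs of $H$ and carry no extra conditions on $G$. The missing piece was the $3$--$1$ case, not a missing weight.

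For comparison, the paper does not go through Lemma~\ref{SHIKAMA} at all. It counts directly from Lemma~\ref{cc}, enumerating disjoint pairs of edges of $\mathbb G$ whose induced $4$-vertex subgraph has no $4$-cycle, and splits according to how the pair sits relative to the bipartition. Its ``Case~2'' (one cross-edge and one within-part edge) is exactly the $3$--$1$ contribution you dropped, and its ``Case~1'' matches your $2$--$2$ computation; the two organizations give the same polynomial $\dfrac{p^2(p^2-8p+8)}{64}$.
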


\begin{proof}
Let $m = d/2$
and let $[d]= V_1 \cup V_2$ be a partition of the vertex set
of $K_{m,m}$.
The number of pairs of edges
$\{i,j\}, \{i,k \}$ 
 satisfying Lemma 1.1 (i) is 
$$
\eta_1=
m(m-1)(m-2)
+
2 m^2 (m-1) (1-p)
+
m^2(m-1) (1-p)^2
$$
where each term corresponds to the case when
(i)
$i,j,k \in V_s$,
(ii)
$i,j\in V_s$, $k \notin V_s $
and
(iii)
$i \in V_s$, $ j,k \notin V_s$,
respectively.

Next, we study the number of pairs of edges 
$\{i,j\}, \{k,\ell \}$
satisfying Lemma 1.1 (ii).
Let ${\mathbb G}_{ijk\ell}$ denote the induced subgraph
of ${\mathbb G}$ on the vertex set $\{i,j,k,\ell\} \subset [d]$.
If either ``$i,j,k,\ell \in V_s$" or ``$i,\ell \in V_s$ and $j, k \notin V_s$"
holds, then
$\{i,j,k,\ell\}$ is a cycle of ${\mathbb G}_{ijk\ell}$
whenever $\{i,j\}, \{k,\ell \} \in E({\mathbb G})$.
Hence, we consider the following two cases:
\begin{itemize}
\item[Case 1.]
Suppose $i,j \in V_s$ and $k,\ell \notin V_s$.
Then, ${\mathbb G}_{ijk\ell}$ has 
a cycle of length 4 if and only if 
either 
$\{i,k\}, \{j,\ell \} \in E({\mathbb G})$
or 
$\{i,\ell \}, \{j,k \} \in E({\mathbb G})$
holds.
Thus,
the expected number of pairs of edges 
is
$\eta_2=\binom{m}{2}^2 (1- (1-p)^2)^2 $.
\item[Case 2.]
Suppose that $i \in V_s$ and $j, k,\ell \notin V_s$ hold.
Then, all of $\{k,\ell\}$, $\{ j, k\}$ and $\{j, \ell\}$ 
are edges of ${\mathbb G}$.
On the other hand, $\{i,j\}$ is an edge of ${\mathbb G}$
with probability $1-p$.
If $\{i,j\}$ is an edge of ${\mathbb G}$, 
then ${\mathbb G}_{ijk\ell}$ has 
a cycle of length 4 if and only if 
either 
$\{i,k\} \in E({\mathbb G})$
or 
$\{i,\ell \} \in E({\mathbb G})$
holds.
Thus,
the expected number of pairs of edges 
is
$\eta_3=
m^2 (m-1) (m-2)
(1-p) p^2$.
\end{itemize} 
Therefore,
$
\varepsilon ({\mathbb G}) = 
\eta_1 + \eta_2 + \eta_3
$.
If $m = d/2$ and $p = 3-\sqrt{5}$,
then
$$
\varepsilon ({\mathbb G}) = \frac{5\sqrt{5}-11}{8} \  d^4 
- \frac{12 \sqrt{5} - 27}{2} d^3
+\frac{19 \sqrt{5} - 44}{2} d^2
+d,
$$
whose leading coefficient is $\frac{5\sqrt{5}-11}{8}
\daitai 0.0225425$.
\end{proof}

\begin{Remark}
By Theorem \ref{randombipartite},
the graph $G$ in Example \ref{twocompletebipartite}
does not satisfy $\mu_d = \varepsilon(G)$
for $d\gg 0$.
In fact, for $d=20$,
by Propositions \ref{ConnectedComponent} and
\ref{completebipartite},
it follows that
$$
\max \left\{ \varepsilon(G) \, : \, 
\begin{array}{c}
G \in \Omega_{20}
\mbox{ and each non-empty connected}
\\
\mbox{component of } \overline{G} 
\mbox{ is a complete bipartite graph}
\end{array}
\right\}
= 4176.
$$
Let $G' \in \Omega_{20}$ be the graph
such that $\overline{G'}$ is the bipartite graph 
with $E(\overline{G'} ) =$ 
\begin{center}
{\small
$\{
\{1,12\},\{1,14\},\{1,15\},\{1,16\},\{1,18\},\{1,19\},\{1,20\},
\{2,11\},\{2,12\},\{2,13\},\{2,15\}, $\\
$\{2,17\},\{2,19\},\{2,20\},
\{3,11\},\{3,12\},\{3,13\},\{3,14\},\{3,15\},\{3,16\},\{3,18\},
\{4,14\},$\\
$\{4,15\},\{4,16\},\{4,17\},\{4,18\},\{4,19\},\{4,20\},
\{5,11\},\{5,12\},\{5,13\},\{5,15\},
\{5,17\},$\\
$\{5,18\},\{5,20\},
\{6,12\},\{6,16\},\{6,17\},\{6,18\},\{6,19\},\{6,20\},\{7,11\},\{7,12\},
\{7,13\},$\\
$\{7,14\},\{7,16\},\{7,17\},\{7,19\},
\{8,11\},\{8,12\},\{8,13\},\{8,14\},\{8,15\},\{8,18\},
\{8,19\},$\\
$\{8,20\},
\{9,11\},\{9,14\},\{9,15\},\{9,16\},\{9,17\},\{9,18\},\{9,19\},
\{10,11\},
\{10,13\},\{10,15\},$\\
$\{10,16\},\{10,18\},\{10,19\},\{10,20\}
\}.$
}
\end{center}
Then, $\varepsilon(G') = 4203 > 4176$.
\end{Remark}

\bigskip
\noindent{\bf Acknowledgment.} 
The authors are grateful to an anonymous referee for 
useful suggestions, and helpful comments.

\bigskip

\end{document}